\documentclass[11pt]{amsart}

\usepackage[T1]{fontenc}
\usepackage[utf8]{inputenc}
\usepackage{amsmath,amssymb,amsthm,mathtools}
\usepackage[colorlinks=true,linkcolor=blue,citecolor=blue,urlcolor=blue]{hyperref}

\newtheorem{theorem}{Theorem}[section]
\newtheorem{proposition}[theorem]{Proposition}
\newtheorem{lemma}[theorem]{Lemma}
\newtheorem{corollary}[theorem]{Corollary}
\theoremstyle{definition}

\newcommand{\F}{\mathbf{F}}
\newcommand{\Int}{\operatorname{Int}}
\newcommand{\Frac}{\operatorname{Frac}}
\newcommand{\Spec}{\operatorname{Spec}}

\title[A Counterexample to Problem 19]{A Counterexample to Problem 19 on Integer-Valued Polynomial Rings}
\author{Haotian Ma}
\address{Zhejiang University}

\begin{document}

\begin{abstract}
We give a negative answer to Problem 19 of Cahen, Fontana, Frisch, and Glaz
concerning the flatness and freeness of rings of integer-valued polynomials.
We construct an explicit one-dimensional Noetherian local domain \(D\) over
the field with two elements and prove that the ring of integer-valued
polynomials on \(D\) is not flat as a \(D\)-module. The argument shows that a
certain polynomial is integer-valued on \(D\) with values in the integral
closure \(T\) of \(D\), but does not belong to the product of \(T\) with the
ring of integer-valued polynomials on \(D\). An application of Elliott's
flatness criterion then yields the counterexample. In particular, the ring of
integer-valued polynomials on an arbitrary integral domain need not be free.
The proof presented in this note was completed by Rethlas \cite{Rethlas2604},
a natural-language automated reasoning system; the author was
responsible for reviewing and checking the argument.
\end{abstract}

\maketitle

\section{Introduction}

Let \(D\) be an integral domain with quotient field \(K\). The ring of
integer-valued polynomials on \(D\) is
\[
  \Int(D)=\{f\in K[X] : f(D)\subseteq D\}.
\]
In the collection of open problems of Cahen, Fontana, Frisch, and Glaz
\cite{CFFG}, the following question is posed:

\begin{quote}
Is \(\Int(D)\) a flat \(D\)-module for every domain \(D\)? More generally, is
\(\Int(D)\) a free \(D\)-module for every domain \(D\)?
\end{quote}

The flatness question was motivated by several positive cases. If \(D\) is a
Dedekind domain, then \(\Int(D)\) is free as a \(D\)-module. More generally,
for Krull domains, and in fact for TV PvMD domains, \(\Int(D)\) is locally
free and hence flat; see \cite{Elliott,HoustonZafrullah,HwangChang}. The goal
of this note is to show that the general expectation is false.

\begin{theorem}\label{thm:main}
There exists a one-dimensional Noetherian local domain \(D\) such that
\(\Int(D)\) is not flat over \(D\). Consequently, \(\Int(D)\) need not be a
free \(D\)-module.
\end{theorem}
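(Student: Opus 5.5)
Following the strategy announced in the abstract, I will construct \(D\) explicitly over \(\F_2\), let \(T\) be its integral closure, and use Elliott's flatness criterion to turn non-flatness into a concrete non-membership statement. A natural candidate is a one-dimensional Noetherian local domain with finite residue field whose integral closure is not local, or which has a nontrivial residue field extension, so that \(T\) is a semilocal principal ideal domain finite over \(D\). For instance, I would take \(D=\F_2+\mathfrak{m}\), where \(\mathfrak{m}\) is the Jacobson radical of \(T=\F_4[t]_{(t)}\), or of a semilocalization of \(\F_2[t]\) at two points, possibly with an extra conductor condition to make the example rigid. Such a \(D\) is Noetherian, local and one-dimensional, and \(T\) is a finitely generated \(D\)-module, because the conductor is \(\mathfrak{m}\) and \(T/\mathfrak{m}\) is finite.

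The reduction goes as follows. If \(\Int(D)\) were flat over \(D\), then Elliott's criterion gives, for the finite extension \(D\subseteq T\) inside \(K\), an equality of the form \(\Int(D,T)=T\cdot\Int(D)\). Here \(\Int(D,T)=\{f\in K[X]: f(D)\subseteq T\}\). The point is that flatness makes formation of integer-valued polynomials commute with the finite extension of the target: intersections of finitely many translates \(\bigcap_{d}(D:\,\cdot)\) commute with tensoring by a finitely presented module. So it suffices to exhibit \(f\in\Int(D,T)\setminus T\,\Int(D)\).

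To find \(f\), I will use that \(D\) and \(T\) have the same elements modulo \(\mathfrak{m}\) up to the residue extension. As a first try I would take \(f=\theta\,(X^2-X)/t\) or a similar expression, where \(\theta\in T\setminus D\) and \(t\) generates \(\mathfrak{m}T\). Since every \(d\in D\) has residue in \(\F_2\), we get \(d^2-d\in\mathfrak{m}\subseteq tT\), so \(f(D)\subseteq T\). Showing \(f\notin T\,\Int(D)\) is the main obstacle. An element of \(T\,\Int(D)\) is a finite sum \(\sum\tau_i g_i\) with \(\tau_i\in T\) and \(g_i\in\Int(D)\). Since \(T\) is finite over \(D\), generated by \(1,\theta\) say, this module equals \(\Int(D)+\theta\Int(D)\). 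So I must rule out a decomposition \(f=g_0+\theta g_1\) with \(g_0,g_1\in\Int(D)\). For this I would pin down the degree-\(\le 2\) part of \(\Int(D)\) explicitly: compute which \(a+bX+cX^2\in K[X]\) map \(D\) into \(D\). I would use test points \(0,1,t^k,1+t^k\in D\) and the divisibility conditions they force, perhaps also needing the fact that \(\Int(D)\) is compatible with degree filtrations, to reduce to degree at most 2. Then I compare coefficients in the basis \(1,\theta\). The expectation is that \(\theta\) times the leading coefficient \(1/t\) cannot be split this way, because \(\Int(D)\) forces its quadratic coefficients into \(t^{-1}D\)-type lattices that are strictly smaller than the \(D\)-span required, with the failure detected modulo \(\mathfrak{m}\).

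If this first choice of \(f\) splits after all, I will rigidify \(D\), taking for example \(D=\F_2+t^2T\), and use higher-degree Fermat-type polynomials such as \(\theta(X^4-X)/t\). Finally, non-freeness follows immediately, since free modules are flat.
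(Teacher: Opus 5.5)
Your setup matches the paper's: Elliott's criterion with $I=\mathfrak m$, $D'=\mathfrak m^{-1}=T$, and the test polynomial $(X^2-X)/t$. Your second candidate, the semilocalization of $\F_2[t]$ at $(t)$ and $(t+1)$ with $t$ replaced by the generator $m=t(t+1)$ of $\mathfrak mT$, is exactly the paper's domain. But the whole content of the theorem is the non-membership $f\notin T\Int(D)$, and there you give only a plan, which does not work.

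There is no reduction to $\deg g_0,\deg g_1\le 2$. A relation $f=g_0+\theta g_1$ may need high-degree $g_i$ whose top terms cancel, so $T\Int(D)\cap K[X]_{\le 2}$ can be strictly larger than $T\cdot(\Int(D)\cap K[X]_{\le 2})$. Your first candidate exhibits this. Take $T=\F_4[t]_{(t)}$, $D=\F_2+tT$ and $g=(X^2+X)/t$. Then $g(a+tv)=v+tv^2$, so $g+g^2$ has residues $\bar v+\bar v^2\in\F_2$ and lies in $\Int(D)$, as does $g+\omega(g+g^2)$. Hence $g=\bigl(g+\omega(g+g^2)\bigr)+\omega(g+g^2)\in T\Int(D)$, and so $\theta g\in T\Int(D)$ for every $\theta\in T$. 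The same trick applied to any $f\in\Int(D,T)$ gives $\Int(D,T)=T\Int(D)$ there, so that domain can never yield a counterexample via $D'=T$. Yet evaluating at $0$, $1$, $t$, $\omega t$ shows that $g$ admits no decomposition $g_0+\omega g_1$ with $g_0,g_1\in\Int(D)$ of degree $\le 2$. So your coefficient comparison would ``prove'' a false statement.

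The missing idea is a degree-independent argument, and this is exactly where the two branches of $T$ are needed. Suppose $X^2+X=\sum a_ih_i$ with $a_i\in M$ and $h_i\in\Int(D)$ of arbitrary degree. The paper evaluates at $u=t(t+1)^{n+1}\in M$, with $n$ larger than every pole order at $N_1$ of the coefficients of the $h_i$. Then $h_i(u)-h_i(0)\in T$ has positive $v_1$-value, so it lies in $N_1\cap D=M$. Hence $u^2+u\in M^2=m^2T$, so $u\in M^2$, contradicting $v_0(u)=1$. The test point is made invisible to $N_1$ while $N_0$ still detects that it is not in $M^2$.

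This device is unavailable when $T$ is local: a point highly divisible at the unique maximal ideal already lies in $\mathfrak m^2$. Your test points $0,1,t^k,1+t^k$ (with $t$ standing for $m$) are symmetric in the two branches and cannot produce it. As it stands, the proposal neither commits to a domain that works nor proves the key non-membership.
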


The domain is completely explicit:
\[
  D=\F_2+t(t+1)\,\F_2[t]_{\F_2[t]\setminus((t)\cup(t+1))}.
\]
The proof uses the following criterion of Elliott.

\section{Elliott's Flatness Criterion}

For an overring \(D'\) of \(D\), write
\[
  \Int(D,D')=\{f\in \Frac(D)[X] : f(D)\subseteq D'\}.
\]
If \(I\) is a nonzero fractional ideal of \(D\), we use the standard notation
\[
  I^{-1}=\{x\in \Frac(D) : xI\subseteq D\}.
\]

\begin{proposition}[Elliott {\cite[Proposition 2.13]{Elliott}}]
\label{prop:elliott}
Let \(D\) be an integral domain for which there exists a finitely generated
ideal \(I\) such that \(D'=I^{-1}\) is an overring of \(D\). If \(\Int(D)\) is
flat over \(D\), then
\[
  \Int(D,D')=D'\Int(D).
\]
\end{proposition}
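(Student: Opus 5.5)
The plan is to prove the nontrivial inclusion \(\Int(D,D')\subseteq D'\Int(D)\) by tensoring a short exact sequence that defines \(I^{-1}\) with the flat module \(\Int(D)\). The reverse inclusion is immediate: \(D'\) is a ring containing \(D\), so for \(g\in\Int(D)\), \(c\in D'\) and \(d\in D\) we have \(cg(d)\in D'D\subseteq D'\). Hence \(D'\Int(D)\subseteq\Int(D,D')\).

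Write \(K=\Frac(D)\), \(M=\Int(D)\), and \(I=(a_1,\dots,a_n)\) with the \(a_i\) nonzero. First, \(I^{-1}\) is the kernel of the \(D\)-linear map
\[
  K\longrightarrow (K/D)^n,\qquad x\longmapsto (a_1x+D,\dots,a_nx+D),
\]
since \(xI\subseteq D\) exactly when every \(a_ix\in D\); this is where finite generation of \(I\) is used. Next, since \(D[X]\subseteq M\subseteq K[X]\), localizing at \(D\setminus\{0\}\) gives \(K\otimes_D M\cong S^{-1}M=K[X]\). Tensoring \(0\to D\to K\to K/D\to 0\) with the flat module \(M\) identifies \((K/D)\otimes_D M\) with \(K[X]/M\).

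Now tensor \(0\to I^{-1}\to K\to (K/D)^n\) with \(M\). By flatness we obtain an exact sequence
\[
  0\to I^{-1}\otimes_D M\to K[X]\to (K[X]/M)^n,
\]
where the last map is \(f\mapsto (a_if+M)_i\). The image of \(I^{-1}\otimes_D M\) in \(K[X]\) is just the product \(I^{-1}M=D'\Int(D)\). Exactness therefore gives
\[
  D'\Int(D)=\{f\in K[X] : a_if\in\Int(D)\ \text{for all } i\}.
\]

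Finally, let \(f\in\Int(D,D')\). For every \(d\in D\) we have \(f(d)\in D'=I^{-1}\), so \(a_if(d)\in D\) for each \(i\). Thus \(a_if\in\Int(D)\) for every \(i\), and the displayed description gives \(f\in D'\Int(D)\).

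The only delicate point is the bookkeeping identifying \(K\otimes_D M\) with \(K[X]\) and \((K/D)\otimes_D M\) with \(K[X]/M\) compatibly with the maps. Both identifications follow from flatness of \(M\) together with the fact that localization is exact. Everything else is formal.
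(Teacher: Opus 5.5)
Your proof is correct. The paper does not prove this proposition; it quotes it from Elliott, so there is no in-paper argument to compare against. Your argument is a complete proof of the standard kind: flat extension commutes with finite intersections and with colons by finitely generated ideals. Concretely,
\[
\{f\in K[X] : f(D)\subseteq I^{-1}\}=\bigcap_i a_i^{-1}\Int(D)=(\Int(D):_{K[X]}I),
\]
and flatness turns the right-hand side into $I^{-1}\Int(D)$.

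Two minor remarks. First, the identifications $K\otimes_D\Int(D)\cong K[X]$ and $(K/D)\otimes_D\Int(D)\cong K[X]/\Int(D)$ need only that $K\otimes_D N$ is the localization $S^{-1}N$ and right exactness of the tensor product; they do not need flatness, contrary to what your last paragraph suggests. Flatness therefore enters exactly once, in keeping $0\to I^{-1}\otimes_D\Int(D)\to K\otimes_D\Int(D)\to (K/D)^n\otimes_D\Int(D)$ exact. Second, you never use that $I^{-1}$ is a ring, except trivially in the easy inclusion, where $D I^{-1}\subseteq I^{-1}$ suffices. So your argument actually shows the following: when $\Int(D)$ is flat, the polynomials mapping $D$ into $I^{-1}$ are exactly $I^{-1}\Int(D)$ for every finitely generated ideal $I$. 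The overring hypothesis matters only for matching the notation $\Int(D,D')$ and for the application to $I=M$, $D'=T$ in the paper.
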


We now construct a domain for which the equality in
Proposition~\ref{prop:elliott} fails.

\section{The Explicit Domain}

Let
\[
  k=\F_2,\qquad A=k[t],\qquad
  S=A\setminus\bigl((t)\cup(t+1)\bigr),
\]
and set
\[
  T=S^{-1}A,\qquad N_0=(t)T,\qquad N_1=(t+1)T,
\]
\[
  m=t(t+1),\qquad M=mT=N_0N_1,
\]
\[
  D=k+M=\{a+u : a\in k,\ u\in M\}\subseteq T.
\]

\begin{lemma}\label{lem:domain}
With the above notation, the following assertions hold.
\begin{enumerate}
\item \(T\) is a semilocal PID with maximal ideals \(N_0\) and \(N_1\), and
      \(M=N_0\cap N_1\).
\item \(D\) is a one-dimensional Noetherian local domain with maximal ideal
      \(M\) and residue field \(D/M\cong \F_2\).
\item One has \(T=D[t]\), the element \(t\) is integral over \(D\), and
      \(T\subseteq \Frac(D)\). In particular, \(\Frac(D)=\Frac(T)\).
\item \(T\) is the integral closure of \(D\) in \(\Frac(D)\), the conductor
      \((D:T)\) equals \(M\), and \(M^{-1}=T\).
\item The two maximal ideals \(N_0\) and \(N_1\) of \(T\) both contract to
      \(M\) in \(D\).
\end{enumerate}
\end{lemma}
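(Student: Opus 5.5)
We prove the five assertions in order, treating \(D=k+M\) as a pullback of the semilocal PID \(T\) along \(k\hookrightarrow T/M\).

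For (1), \(T\) is a localization of the PID \(A=k[t]\), hence a PID. Since \(S\) is the complement of the union of the two primes \((t)\) and \((t+1)\), the primes of \(A\) avoiding \(S\) are \(0\), \((t)\) and \((t+1)\). So the maximal ideals of \(T\) are exactly \(N_0\) and \(N_1\). They are comaximal (since \(1=(t+1)-t\)), so \(M=N_0N_1=N_0\cap N_1\) is the Jacobson radical of \(T\). By the Chinese remainder theorem,
\[
T/M\cong T/N_0\times T/N_1\cong k\times k .
\]

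For (2), we use the pullback description. Because \(M\) is the kernel of \(T\to k\times k\), the ring \(D\) is the preimage of the diagonal \(k\subset k\times k\). Hence \(D/M\cong k\), so \(M\) is maximal in \(D\). Every element \(a+u\) with \(a\neq 0\) is a unit of \(T\), because \(1+u\) lies outside both \(N_0\) and \(N_1\). Its inverse \((a+u)^{-1}=a^{-1}-a^{-1}u(a+u)^{-1}\) lies in \(k+M=D\). Thus \(D\) is local with maximal ideal \(M\).

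For (3), note that \(t^2+t=m\in M\subseteq D\), so \(t\) satisfies the monic equation \(X^2+X-m=0\) over \(D\). Also \(T=D+M+kt\): modulo \(M\), the ring \(T\) is spanned over \(k\) by \(1\) and \(t\), since \(t\mapsto(0,1)\). Hence \(T=D+Dt=D[t]\). Moreover \(t=m/(t+1)\) with \(t+1\in T\), and \(T\) is generated by \(D\) and \(t\), so it suffices to see \(t\in\Frac(D)\). Indeed \(t=t^2m/(t^2m)\cdot t\), and more simply \(t=(tm)/m\) with \(tm\in M\subseteq D\). So \(T\subseteq\Frac(D)\) and \(\Frac(D)=\Frac(T)\).

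We then return to the remaining parts of (2). Since \(T=D+Dt\) is a finite \(D\)-module, Eakin--Nagata (or Artin--Tate) gives that \(D\) is Noetherian because \(T\) is. Integrality of \(T\) over \(D\) gives \(\dim D=\dim T=1\).

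For (4), \(T\) is a PID, hence integrally closed, and it is integral over \(D\) with the same fraction field. Therefore \(T\) is the integral closure of \(D\).

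For the conductor, \(MT=M\subseteq D\), so \(M\subseteq(D:T)\). Conversely, \((D:T)\) is a proper ideal of \(D\), since \(t\notin D\). It is therefore contained in the unique maximal ideal \(M\), and so \((D:T)=M\).

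For \(M^{-1}\), the inclusion \(T\cdot M=M\subseteq D\) gives \(T\subseteq M^{-1}\). Conversely, let \(x\in M^{-1}\). Then \(xm\in D\subseteq T\), so \(x\in m^{-1}T\). We must rule out a genuine pole, and the key step is to show \(xM\subseteq M\). Suppose \(xM\not\subseteq M\). Then \(xM=D\), since \(xM\) is a \(D\)-ideal and \(D\) is local. So \(M\) would be invertible, hence principal, say \(M=yD\). But \(M=mT\) and \(T=D+Dt\) give \(\mu_D(M)=2\). To see this, note \(M/M^2\) with \(M^2=m^2T\) satisfies \(\dim_k M/M^2=\dim_k mT/m^2T=\dim_k T/mT=2\). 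This contradicts principality, so \(xM\subseteq M\). Hence \(x\in(M:M)\), and \((M:M)=(mT:mT)=T\). Therefore \(M^{-1}=T\). This dimension count is the step needing the most care: one must check that \(M\cdot M=m^2T\) as a \(D\)-ideal, which holds because \(MT=M\).

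For (5), each \(N_i\cap D\) is a prime of \(D\) containing \(M\), since \(M\subseteq N_i\). It is proper because \(1\notin N_i\), and therefore it equals \(M\), as \(M\) is maximal in \(D\).
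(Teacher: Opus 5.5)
Your proof is correct, and it departs from the paper's at several points.

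For \(T=D[t]\), the paper shows that every denominator \(g\in S\) satisfies \(g\equiv 1 \pmod{m}\), so \(g\) is a unit of the local ring \(D\). You instead read off \(T=D+Dt\) from the fact that \(T/M\cong k\times k\) is spanned by the images \((1,1)\) and \((0,1)\) of \(1\) and \(t\). This gives module-finiteness with explicit generators at once.

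For the conductor, the paper uses a residue computation to show that \(y=1+u\) fails because \(yt\notin D\). You observe instead that \((D:T)\subseteq D\) is a proper ideal and hence lies in \(M\) by locality. Properness rests on \(t\notin D\), which you should justify: the image \((0,1)\) of \(t\) lies off the diagonal. Your proof of (5) via maximality of \(M\) likewise replaces the paper's residue check; both of these are cleaner than the paper's versions.

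The main divergence is \(M^{-1}=T\). The paper simply reuses the conductor: \(xM\subseteq D\) gives \((xm)T\subseteq D\), so \(xm\in(D:T)=mT\) and \(x\in T\), which takes two lines. You instead show that \(M\) is not principal, since \(\dim_k M/M^2=\dim_k mT/m^2T=2\). You then invoke the general fact that a non-invertible maximal ideal of a local domain satisfies \(M^{-1}=(M:M)\), which here equals \((mT:mT)=T\). Your route costs an extra computation, but it explains conceptually why \(M^{-1}\) is a ring at all. The paper's route is shorter once the conductor is known.

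Minor blemishes:
\begin{itemize}
\item The aside \(t=\frac{t^2m}{t^2m}\cdot t\) and the remark \(t=m/(t+1)\) are superfluous.
\item Artin--Tate is not the relevant result for deducing that \(D\) is Noetherian; Eakin--Nagata is.
\end{itemize}
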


\begin{proof}
Since \(T\) is obtained from the PID \(k[t]\) by inverting all elements outside
\((t)\cup(t+1)\), it is a semilocal PID whose maximal ideals are exactly
\[
  N_0=(t)T \qquad\text{and}\qquad N_1=(t+1)T.
\]
These ideals are comaximal, so
\[
  M=N_0N_1=N_0\cap N_1=mT.
\]
This proves (1).

We next show that \(D\) is local with maximal ideal \(M\). Since
\(D/M\cong k=\F_2\), the ideal \(M\) is maximal in \(D\). If \(u\in D\setminus
M\), then \(u=a+v\) with \(a\in \F_2^\times=\{1\}\) and \(v\in M\), so
\(u=1+v\). Because \(M\subseteq N_0\cap N_1\) lies in the Jacobson radical of
the semilocal ring \(T\), the element \(1+v\) is a unit of \(T\). Its inverse
lies in \(1+M\subseteq D\), so \(u\) is a unit of \(D\). Thus \(D\) is local
with maximal ideal \(M\).

To prove that \(T=D[t]\), let \(f(t)/g(t)\in T\) with \(g\in S\). Since
\(g\notin (t)\) and \(g\notin (t+1)\), we have \(g(0)=g(1)=1\) in \(\F_2\). It
follows that \(g-1\) vanishes at both \(0\) and \(1\), hence is divisible by
\(m=t(t+1)\) in \(k[t]\). Therefore
\[
  g=1+mh(t)
\]
for some \(h(t)\in k[t]\subseteq T\). Thus \(g\in 1+M\subseteq D\), and because
\(D\) is local, \(g\) is a unit of \(D\). Hence \(f(t)/g(t)\in D[t]\), proving
\(T=D[t]\).

The element \(t\) satisfies the monic equation
\[
  Y^2+Y+m=0,
\]
because \(t^2+t=m\in D\). Hence \(t\) is integral over \(D\), and since
\(T=D[t]\), the ring \(T\) is module-finite over \(D\). As \(T\) is Noetherian,
Eakin's theorem implies that \(D\) is Noetherian.

We now determine the dimension of \(D\). Let \(\mathfrak p\) be a nonzero prime
ideal of \(D\). Since \(T\) is integral over \(D\), there exists
\(\mathfrak q\in \Spec(T)\) with \(\mathfrak q\cap D=\mathfrak p\). Because
\(T\) is one-dimensional and \(\mathfrak q\neq 0\), the prime \(\mathfrak q\) is
maximal, so \(\mathfrak q\) is either \(N_0\) or \(N_1\). The contraction of a
maximal ideal under an integral extension is maximal, hence \(\mathfrak p\) is
maximal in \(D\). Since \(D\) is local, \(\mathfrak p=M\). Therefore \(M\) is
the only nonzero prime ideal of \(D\), so \(\dim D=1\). This completes (2).

For (3), note that \(m\in D\) and
\[
  mt=t^2(t+1)\in mT=M\subseteq D.
\]
Since \(m\neq 0\), it follows that
\[
  t=\frac{mt}{m}\in \Frac(D).
\]
Thus \(T=D[t]\subseteq \Frac(D)\), and therefore \(\Frac(D)=\Frac(T)\).

To prove the first statement in (4), let \(x\in \Frac(D)\) be integral over
\(D\). The same monic polynomial also shows that \(x\) is integral over \(T\),
and since \(T\) is integrally closed, we get \(x\in T\). Hence \(T\) is the
integral closure of \(D\) in \(\Frac(D)\).

For (5), take \(a+u\in D\) with \(a\in\F_2\) and \(u\in M\). Modulo \(N_0\) its
image is \(a\), because \(M\subseteq N_0\). Therefore \(a+u\in N_0\) if and
only if \(a=0\), namely if and only if \(a+u\in M\). Hence \(N_0\cap D=M\).
The same argument gives \(N_1\cap D=M\).

It remains to compute the conductor and \(M^{-1}\). Since \(MT\subseteq M\),
we have \(M\subseteq (D:T)\). Conversely, let \(y\in (D:T)\). Then
\(y=y\cdot 1\in D\), so write \(y=a+u\) with \(a\in\F_2\) and \(u\in M\). If
\(a=1\), then
\[
  yt=t+ut.
\]
Modulo \(N_0\) this element has residue \(0\), while modulo \(N_1\) it has
residue \(1\), because \(t\equiv 1 \pmod{N_1}\) and \(u\in M\subseteq N_1\).
Therefore \(yt\notin D\), contradicting \(y\in (D:T)\). Hence \(a=0\), so
\(y\in M\). Thus \((D:T)=M\).

Finally, if \(x\in T\), then \(xM\subseteq M\subseteq D\), so \(T\subseteq
M^{-1}\). Conversely, let \(x\in M^{-1}\). Then \(xm\in xM\subseteq D\), and
\[
  (xm)T=x(mT)=xM\subseteq D.
\]
Hence \(xm\in (D:T)=M=mT\). Write \(xm=ms\) with \(s\in T\). Since \(m\neq 0\)
in the domain \(T\), we obtain \(x=s\in T\). Thus \(M^{-1}=T\), completing
the proof.
\end{proof}

\section{An Explicit Obstruction Polynomial}

Define
\[
  f(X)=\frac{X^2+X}{m}\in \Frac(D)[X].
\]

\begin{proposition}\label{prop:obstruction}
With \(D\), \(T\), and \(m=t(t+1)\) as above, one has
\[
  f\in \Int(D,T)\setminus T\Int(D).
\]
Equivalently,
\[
  X^2+X\notin M\Int(D).
\]
\end{proposition}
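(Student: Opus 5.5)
The plan has two parts. First check that \(f\) is \(T\)-valued on \(D\) and translate the second claim into the equivalent form. Then rule out \(X^2+X\in M\Int(D)\) by showing that every element of \(\Int(D)\), after a substitution, becomes constant modulo \(M\).

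For \(f\in\Int(D,T)\), write \(d=a+u\) with \(a\in\F_2\) and \(u=ms\in M\), \(s\in T\). In characteristic \(2\) we have \(d^2+d=(a^2+a)+(u^2+u)=u^2+u\), since \(a^2+a=0\) and the cross term \(2au\) vanishes. Hence \(f(d)=ms^2+s\in T\).

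For the equivalence, suppose \(f=\sum \tau_i g_i\) with \(\tau_i\in T\) and \(g_i\in\Int(D)\). Multiplying by \(m\) gives \(X^2+X=\sum (m\tau_i)g_i\in mT\,\Int(D)=M\Int(D)\). Conversely, if \(X^2+X=\sum m_i g_i\) with \(m_i\in M=mT\), then dividing by \(m\) exhibits \(f\in T\Int(D)\). So it suffices to derive a contradiction from
\[
X^2+X=\sum_i m_i g_i,\qquad m_i=m\tau_i,\ \tau_i\in T,\ g_i\in\Int(D).
\]

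Substitute \(X=mY\) and put \(h_i(Y)=g_i(mY)\). Since \(mT=M\subseteq D\), each \(h_i\) lies in \(\Int(T,D)\). Dividing by \(m\) gives the identity
\[
mY^2+Y=\sum_i \tau_i h_i(Y)\quad\text{in } \Frac(T)[Y].
\]
Now reduce modulo \(M\), using \(T/M\cong T/N_0\times T/N_1\cong \F_2\times\F_2\), so that \(D/M\) is the diagonal copy of \(\F_2\). For every \(y\in T\), the value \(h_i(y)\in D\) reduces to a diagonal pair \((c,c)\).

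The key step, and the main obstacle, is to show that \(y\mapsto h_i(y)\bmod M\) is in fact constant on \(T\). I would argue via uniform continuity. Since \(h_i\in\Int(T)\subseteq\Int(T_{N_j})\) and \(T_{N_j}\) is a DVR, a polynomial with a bounded denominator \(m^r\) is \(N_j\)-adically uniformly continuous. So there is \(k\) such that \(h_i(y)\bmod N_j\) depends only on \(y\bmod N_j^k\), for \(j=0,1\).

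Take \(y,y'\in T\). By the Chinese remainder theorem choose \(z\in T\) with \(z\equiv y\pmod{N_0^k}\) and \(z\equiv y'\pmod{N_1^k}\). Then
\[
h_i(y)\bmod N_0=h_i(z)\bmod N_0=h_i(z)\bmod N_1=h_i(y')\bmod N_1,
\]
where the middle equality is the diagonality of values in \(D/M\). Diagonality also gives \(h_i(y)\bmod N_0=h_i(y)\bmod N_1\), so \(h_i(y)\equiv h_i(y')\pmod M\). Thus \(h_i\bmod M\) is a constant \(c_i\in\F_2\). The delicate point is the uniform-continuity claim for elements of \(\Int(T)\) with denominators; I would prove it directly by writing \(h_i=p/m^r\) with \(p\in T[Y]\) and using \(p(y+w)-p(y)\in wT\).

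With this in hand the right-hand side \(\sum_i\tau_i h_i(y)\bmod M=\sum_i \bar\tau_i c_i\) is independent of \(y\). The left-hand side, however, is not constant modulo \(M\). At \(y=0\) it is \(0\). At \(y=e:=t+1\) it is \(me^2+e\equiv e\pmod M\), and \(e\equiv 1\pmod{N_0}\), \(e\equiv 0\pmod{N_1}\), so its residue is \((1,0)\neq(0,0)\). This contradiction shows \(X^2+X\notin M\Int(D)\), and hence \(f\notin T\Int(D)\).
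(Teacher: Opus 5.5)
Your proof is correct, but it is organized differently from the paper's. The paper keeps the relation \(X^2+X=\sum a_ih_i\) with \(a_i\in M\) and evaluates at just two points, \(0\) and \(u=t(t+1)^{n+1}\). It shows \(h_i(u)-h_i(0)\in N_1\) by a valuation estimate at \(N_1\) only, transfers this to \(M\) via \(N_1\cap D=M\), and then uses \(a_i\in M\) to land in \(M^2\), contradicting \(v_0(u)=1\).

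Your substitution \(X=mY\) absorbs the factor \(m\) into the coefficients, so you work modulo \(M\) instead of \(M^2\). Your CRT argument then proves a stronger and more conceptual statement: every element of \(\Int(T,D)\) is constant modulo \(M\) on all of \(T\). This makes the obstruction transparent, since \(T/M\cong\F_2\times\F_2\) while \(D/M\) is the diagonal, so \(Y\) cannot be constant modulo \(M\).

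The cost is that you use continuity at both primes plus CRT, which is more than the contradiction needs. The paper's \(u\) equals \(m\,(t+1)^n\), i.e.\ it corresponds to your \(y=(t+1)^n\). Comparing \(y=0\) with \(y=(t+1)^k\) for large \(k\) requires only \(N_1\)-adic continuity together with \(N_1\cap D=M\).

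Your direct continuity proof via \(h_i=p/m^r\) with \(p\in T[Y]\) is sound, because every nonzero element of \(T\) is a unit times \(t^a(t+1)^b\). So you do not need the inclusion \(\Int(T)\subseteq\Int(T_{N_j})\) at all.
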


\begin{proof}
Let \(x\in D\). Since \(D=\F_2+M\) and \(M=mT\), we may write
\[
  x=a+mv
\]
with \(a\in\F_2\) and \(v\in T\). Because \(a^2+a=0\) in \(\F_2\), we obtain
\[
  x^2+x=(a+mv)^2+(a+mv)=mv+m^2v^2=m(v+mv^2)\in mT.
\]
Therefore
\[
  f(x)=v+mv^2\in T,
\]
so \(f\in \Int(D,T)\).

Suppose now that \(f\in T\Int(D)\). Since \(mT=M\), this implies
\[
  X^2+X=mf\in m\,T\Int(D)=M\Int(D).
\]
Thus there exist \(a_1,\dots,a_r\in M\) and \(h_1,\dots,h_r\in \Int(D)\) such
that
\[
  X^2+X=\sum_{i=1}^r a_i h_i.
\]

Let \(v_1\) denote the discrete valuation on \(\Frac(T)\) corresponding to the
DVR \(T_{N_1}\), normalized by \(v_1(t+1)=1\). Choose an integer \(n\ge 1\)
strictly larger than the pole order at \(N_1\) of every coefficient of every
polynomial \(h_i\). Set
\[
  u=t(t+1)^{n+1}\in M.
\]
Consider a nonconstant term \(cX^r\) of some \(h_i\), with \(r\ge 1\). By the
choice of \(n\), we have \(v_1(c)\ge -(n-1)\), while
\[
  v_1(u)=n+1.
\]
Hence
\[
  v_1(cu^r)=v_1(c)+r\,v_1(u)\ge -(n-1)+r(n+1)\ge 2.
\]
Now write
\[
  h_i(X)-h_i(0)=\sum_{r\ge 1} c_r X^r,
\]
where each \(c_r\in \Frac(T)\). The preceding estimate shows that
\[
  v_1(c_r u^r)\ge 2
\]
for every \(r\ge 1\). Since \(h_i\in \Int(D)\) and \(u,0\in D\), we have
\[
  h_i(u),\,h_i(0)\in D\subseteq T.
\]
Therefore
\[
  h_i(u)-h_i(0)=\sum_{r\ge 1} c_r u^r
\]
is an element of \(T\). Applying the valuation \(v_1\) to this sum in
\(\Frac(T)\), we obtain
\[
  v_1\bigl(h_i(u)-h_i(0)\bigr)
  =v_1\!\left(\sum_{r\ge 1} c_r u^r\right)
  \ge \min_{r\ge 1} v_1(c_r u^r)\ge 2.
\]
Since \(h_i(u)-h_i(0)\in T\) and its \(v_1\)-value is strictly positive, it
follows that
\[
  h_i(u)-h_i(0)\in N_1.
\]
Now \(u,0\in D\), so \(h_i(u),h_i(0)\in D\). Since \(N_1\cap D=M\) by
Lemma~\ref{lem:domain}, it follows that
\[
  h_i(u)-h_i(0)\in M.
\]
Because each \(a_i\in M\), we conclude that
\[
  a_i h_i(u)-a_i h_i(0)\in M^2
\]
for every \(i\). Summing over \(i\), we obtain
\[
  (u^2+u)-0=(X^2+X)(u)-(X^2+X)(0)\in M^2.
\]
Since \(u\in M\), we also have \(u^2\in M^2\), whence
\[
  u\equiv u^2+u \pmod{M^2}.
\]
Thus \(u\in M^2\).

This is impossible. Indeed, let \(v_0\) be the discrete valuation corresponding
to \(T_{N_0}\), normalized by \(v_0(t)=1\). Then
\[
  v_0(u)=v_0\bigl(t(t+1)^{n+1}\bigr)=1,
\]
whereas every element of \(M^2=m^2T\) has \(v_0\)-value at least \(2\). This
contradiction shows that
\[
  X^2+X\notin M\Int(D),
\]
and hence \(f\notin T\Int(D)\).
\end{proof}

\section{The Counterexample}

\begin{proof}[Proof of Theorem~\ref{thm:main}]
Let \(D\) be the domain constructed in Section~3. By
Lemma~\ref{lem:domain}, \(D\) is a one-dimensional Noetherian local domain
with maximal ideal \(M\), and \(M^{-1}=T\) is an overring of \(D\). Since
\(D\) is Noetherian, the ideal \(M\) is finitely generated.

By Proposition~\ref{prop:obstruction}, the polynomial
\[
  f(X)=\frac{X^2+X}{t(t+1)}
\]
lies in \(\Int(D,T)\) but not in \(T\Int(D)\). Therefore
\[
  \Int(D,T)\neq T\Int(D).
\]
If \(\Int(D)\) were flat over \(D\), then
Proposition~\ref{prop:elliott} applied to \(I=M\) and \(D'=M^{-1}=T\) would
imply the opposite equality
\[
  \Int(D,T)=T\Int(D),
\]
a contradiction. Hence \(\Int(D)\) is not flat over \(D\).

Finally, every free module is flat, so \(\Int(D)\) cannot be free over \(D\).
\end{proof}

\begin{corollary}
Problem \(19\mathrm{a}\) and Problem \(19\mathrm{b}\) of \cite{CFFG} both have
negative answers.
\end{corollary}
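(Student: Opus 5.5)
The corollary follows directly from Theorem~\ref{thm:main}. The plan is to match the two parts of Problem~19 of \cite{CFFG}, as quoted in the introduction, with the two conclusions of the theorem.

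Problem~19a asks whether \(\Int(D)\) is a flat \(D\)-module for every domain \(D\). To answer it negatively, we exhibit one domain for which flatness fails. We take the explicit domain
\[
  D=\F_2+t(t+1)\,\F_2[t]_{\F_2[t]\setminus((t)\cup(t+1))}
\]
of Section~3. By Lemma~\ref{lem:domain} it is a one-dimensional Noetherian local domain, and the proof of Theorem~\ref{thm:main} shows that \(\Int(D)\) is not flat over \(D\). That proof has two ingredients. First, Proposition~\ref{prop:obstruction} shows \(\Int(D,T)\neq T\Int(D)\). Second, Proposition~\ref{prop:elliott} is applied with the finitely generated ideal \(I=M\) and the overring \(D'=M^{-1}=T\). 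So this single \(D\) is a counterexample to 19a.

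Problem~19b asks whether \(\Int(D)\) is always a free \(D\)-module. We use the same \(D\). Every free module is a direct sum of copies of \(D\), and direct sums of flat modules are flat, so every free module is flat. Hence if \(\Int(D)\) were free it would be flat, contradicting the first part. Therefore 19b also has a negative answer. Since 19b is the stronger statement, its failure is in any case implied by the failure of 19a.

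There is no real obstacle here. The only point to check is that the problems are read as universal statements (``for every domain \(D\)''), so that one explicit counterexample suffices. All the substantive work is in Proposition~\ref{prop:obstruction} and the reduction through Elliott's criterion, and that work has already been carried out.
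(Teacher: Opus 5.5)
Your proposal is correct and matches the paper: the corollary is read off directly from Theorem~\ref{thm:main}, whose non-flatness comes from Proposition~\ref{prop:obstruction} combined with Elliott's criterion for \(I=M\), \(D'=M^{-1}=T\), and whose non-freeness follows because free modules are flat. Your remark that one explicit domain suffices, since both problems are universal statements, is also correct.
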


\section{Use of AI}

The proof in this note was obtained with the assistance of Rethlas
\cite{Rethlas2604}, a natural-language automated reasoning system for
automated conjecture resolution. In the present work, Rethlas was used as a
natural-language mathematical reasoning tool for searching for a
counterexample, organizing the construction, and producing a candidate proof.

For this problem, Rethlas supplied the explicit domain \(D=k+M\), the
connection with Elliott's flatness criterion, and the obstruction showing that
the equality \(\Int(D,T)=T\Int(D)\) fails. The proof in its entirety was then
verified by the human author.

\end{document}